	\theoremstyle{definition}
	\newtheorem{definition}{Definition}[section]
	\newtheorem{example}{Example}[section]
	\newtheorem{theorem}{Theorem}[section]
	\newtheorem{coro}{Corollary}[section]
	\newtheorem{prop}{Proposition}[section]
\pgfplotsset{compat=1.17}
\title{Quadratic Formula: Revisiting a Proof\\ Through the Lens of Transformations}
\author{Shawn S.~Wirts
	\\
	Department of Mathematics\\
	Fresno Pacific University\\
	Fresno, CA 93702\\
	\texttt{shawn.wirts@fresno.edu} \\
}
\pgfplotsset{compat=1.17} 
\begin{document}
\maketitle

\begin{abstract} 
	In this paper,
	we derive the quadratic formula
	as a consequence of constructively proving
	the existence of standard and factored forms
	for general form real quadratic functions.
	Emphasis is put on connections to graphing of corresponding parabolas
	through function transformation perspectives of scaling and translation.
\end{abstract}

\keywords{Quadratic Formula \and Proof \and Transformation \and Quadratics}

\section{Introduction}

In the routine of teaching college mathematics
at an introductory and general education level,
frequent reconsideration of
how best to present certain topics
is a common challenge.
Derivations of significant results certainly have their place,
but tradition and routine can often create
some intellectual stagnancy,
and a shift in perspective
can be refreshing for instructors and students.

Po-Shen Loh's expositions \cite{loh2019simple,loh2020explanation}
on deriving the quadratic formula through
factorization and symmetry arguments
is a less common algebraic approach
than a traditional completing-the-square method.
In 2019, it caught some attention in mathematics education circles,
likely due to its refreshing derivation method.
As Loh points out,
the approach is hardly new
in the sense of the history of algebra,
but certainly may have fallen out of favor
in modern western mathematics education.

Perhaps the most strikingly satisfying features
of the method Loh revisits to solve a quadratic equation
is the symmetry of the set-up using linear factorization
blended with the connection to an ancient mathematics problem:
solving for two unknowns given their known product and sum.
Though this approach has its own form of algebraic elegance,
there are some that may find the symbolic algebraic arguments
only marginally more accessible
than a traditional completing-the-square based derivation
of the quadratic formula.

In some presentations of college algebra and precalculus level content
(see Blitzer's \emph{Precalculus} \cite{RBlitzer}),
analysis of quadratic functions
and derivation of the quadratic formula
follows development of both
the topics of function transformation
and a basic introduction to the complex plane $\mathbb{C}$
and its component-based algebra.
For those craving a visually accessible motivation
for the linear factorization based derivation,
as well as to simultaneously build connections between
general, standard, and factored forms of quadratics,
we revisit a derivation of the quadratic formula
inspired by Loh's perspective
and additionally under an assumption
of a familiarity with function transformation.

\subsection{A Foundation of Transformations}
\label{sec:trans}

At the Precalculus level in the development of introductory function theory in $\mathbb{R},$
one encounters the concept of relating
basic ``parent'' functions and their Cartesian graphs
to the family of transformed functions and graphs
through scaling (colloquially stretching, squishing, and reflecting)
and translation (shifting).

Given a parent function $g(t)$
with translation constants $C,D\in\mathbb{R}$
and non-zero scaling constants $A,B\in\mathbb{R},$
a transformed version of function $g$ into a function $f$ may be given by
\begin{align*}
	f(x)&=A\cdot g\bigl(B\cdot x+C\bigr)+D.
\end{align*}
Using the domain and range variable correspondence system
\begin{equation}  \label{eq:corsystem}
	\begin{cases}
		\hfill t & =\ B\cdot x +C\\
		A\cdot s+D& =\ y
	\end{cases}
\end{equation}
to connect parent-graph data to transformed-graph data,
a correspondence of
points $(t,s)$ on the graph of $s=g(t)$
with
points $(x,y)$ on the graph of $y=f(x)$
through the point correspondence of
\begin{align}  \label{eq:correspondence}
	\Bigl(t,s\Bigr)
	&\rightarrow \Bigl(x,y\Bigr)
	=\left(\frac{t}{B}-\frac{C}{B}\ ,\ A\cdot s +D\right)
	=\left(\frac{t-C}{B}\ ,\ A\cdot g(t) +D\right)
	=\Bigl(x,f(x)\Bigr)
\end{align}
may be achieved.

Avoiding reuse of traditional $x$ and $y$ variables for both
parent-function input  and output
as well as transformed-function input and output
allows for some simplicity in using substitutions to relate data
on the corresponding graphs,
as well as more readily adapt to tabular functions' data
in the abstraction of transformations.

\begin{example}\label{ex:wave}
	Suppose it is given that parent wave function
	$g:\mathbb{R}\rightarrow\mathbb{R}$
	given by $g(t)=\sin(t)$
	includes points $\left(-\frac{\pi}{2},-1\right),\left(0,0\right),\left(\frac{\pi}{2},1\right)$
	on the graph $s=g(t)$ in the $ts-$plane,
	and as well has a graph that exhibits rotational symmetry about the origin
	due to $g$ being an odd function.
	Consider the transformed wave function given by
	\begin{align*}
		f(x)=3\cdot g\left(\pi\cdot x+\frac{\pi}{2}\right)+2.
	\end{align*}
	
	The data correspondence system
	\begin{equation*} 
		\begin{cases}
			\hfill t & =\ \pi\cdot x +\frac{\pi}{2}\\
			3\cdot s+2& =\ y
		\end{cases}
	\end{equation*}
	implies that by indexing the correspondence between data points we can calculate
	\begin{align*}
		\Bigl(t_k,s_k\Bigr)
		&\rightarrow
		\Bigl(x_k,y_k\Bigr)
		& \text{ where }
		\Bigl(x_k,y_k\Bigr)
		&=\left(\frac{t_k}{\pi}-\frac{1}{2}\ ,\ 3\cdot s_k +2\right), \text{ hence }
		\\
		\Bigl(t_1,s_1\Bigr)
		=\left(-\frac{\pi}{2},-1\right)
		&\rightarrow
		\left(-1\ ,\ -1\right)
		&\text{ as }
		\Bigl(x_1,y_1\Bigr)
		&=\left(\frac{(-\pi/2)}{\pi}-\frac{1}{2}\ ,\ 3\cdot (-1) +2\right),
		\\
		\Bigl(t_2,s_2\Bigr)
		=\left(0,0\right)
		&\rightarrow
		\left(\frac{1}{2}\ ,\ 2\right)
		&\text{ as }
		\Bigl(x_2,y_2\Bigr)
		&=\left(\frac{(0)}{\pi}-\frac{1}{2}\ ,\ 3\cdot (0) +2\right),
		\\
		\text{ and }
		\Bigl(t_3,s_3\Bigr)
		=\left(\frac{\pi}{2},1\right)
		&\rightarrow 
		\left(0\ ,\ 5\right)
		&\text{ as }
		\Bigl(x_3,y_3\Bigr)
		&=\left(\frac{(\pi/2)}{\pi}-\frac{1}{2}\ ,\ 3\cdot (1) +2\right).
	\end{align*}
	As an example of a transformed wave function,
	the graph of $y=3\cdot \sin\left(\pi\cdot x+\frac{\pi}{2}\right)+2$
	is readily viewed having its the parent wave-function's
	amplitude vertically-scaled by multiplying by $3,$
	its equilibrium vertically-shifted up by $2,$
	its period rescaled to cycle lengths of $2$ (by horizontally-scaling through division by $\pi$),
	and introducing a phase-shift of $-\frac{1}{2}$ (by finally left-shifting the graph of the wave).
	This sort of analysis also conveniently identifies the transformed extrema points
	where a new minimum is achieved at $(-1,-1)$
	and a new maximum is achieved at $(0,5).$
	Additionally we identify a new point $\left(\frac{1}{2},2\right)$
	about which $y=f(x)$ has rotational symmetry.

	\begin{figure}[h]
		\centering
		\includegraphics
		  	{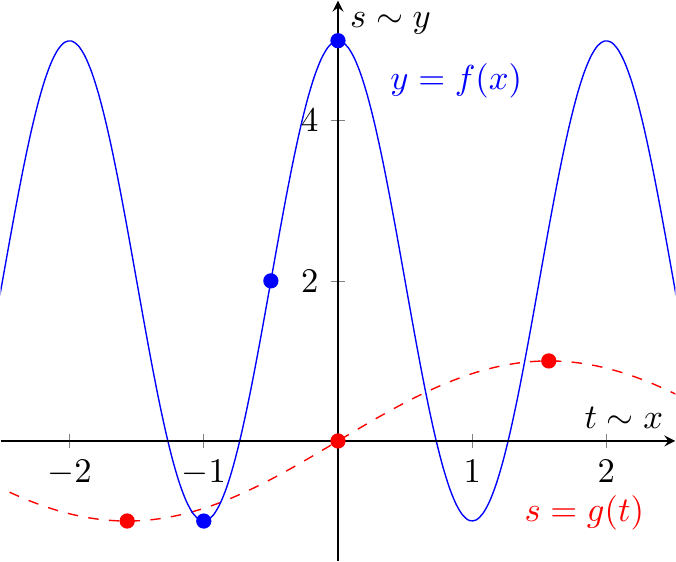}
		\caption{Transformed wave $y=f(x)$ superimposed over a parent wave $s=g(t)$ in Example \ref{ex:wave}.}
		\label{fig:waves}
	\end{figure}

\end{example}

The correspondence approach summarized in \ref{eq:correspondence}
also can help students to understand
the qualitative graphical interpretations of the ordered effects
of the constants used in the transformation.
The intuitive vertical-scaling of output data caused by multiplication by $A$
followed by vertical-shifting (up) by addition of $D$
can be paired with the often counter-intuitive effects
of horizontal-scaling by \emph{division} of input-data by $B$
followed by horizontal-shifting (right) by $-\frac{C}{B}.$
Alternatively, there is also the perspective of
first horizontally-shifting (right) by $-C$
followed by the horizontal-scaling through division by $B,$
in which the addition (shifting)
ends up taking order of operation priority 
over multiplication (scaling)
due to the inversion steps
needed to connect \ref{eq:corsystem} to \ref{eq:correspondence}.
Regardless of choice of the non-commutative order
of horizontal transformation effects,
substitution can be used
to establish point correspondence
and subsequently interpret graphical effects.

\subsubsection{Transformations of Parabolas}
\label{sec:trans:parabola}

Students initially become familiar
with the basic squaring function
$g:\mathbb{R}\rightarrow\mathbb{R}$
given by $g(t)=t^2$
and the graphical features of its representative parabola
$s=g(t)$ (i.e. $s=t^2)$ within the Cartesian $ts-$plane,
including its graph's reflective symmetry across the vertical axis
due to $g$ being an even function,
as well as the function's absolute (and relative) minimum
$0=\min\{g(t) | t\in\mathbb{R}\}=g(0).$
Following understanding these basics,
it is an appropriate extension to consider
the general real quadratic functions.

\begin{definition}[General form]
	For fixed constants $b,c\in\mathbb{R}$
	and nonzero $a\in\mathbb{R},$
	the function $f:\mathbb{R}\rightarrow\mathbb{R}$
	given by
	\begin{align} \label{eq:general}
		f(x)&=a\cdot x^2+b\cdot x+c
	\end{align}
	is a (real) \emph{quadratic} function
	written in \emph{general form.}
\end{definition}

\begin{definition}[Standard form]
	For a (real) quadratic function $f$ given by
	$f(x)=a\cdot x^2+b\cdot x+c,$
	if constants $h,k\in\mathbb{R}$
	satisfy
	\begin{align}  \label{eq:standard}
		f(x)=a\cdot(x-h)^2+k,
	\end{align}
	then this form
	is said to be the \emph{standard form} of $f.$
\end{definition}

What is not immediately obvious
in the introduction of quadratic functions
is that it is possible to write any real quadratic
in a unique standard form.
A common verification of this result
is to complete-the-square,
which includes much of the algebra needed
to also derive the quadratic formula.
Since we will use a different method,
at this point we we treat the existence of the standard form
as a proposition.

\begin{figure}[h]
	\centering
	\begin{subfigure}{.4\textwidth}
		  \centering
		  \includegraphics
		  	{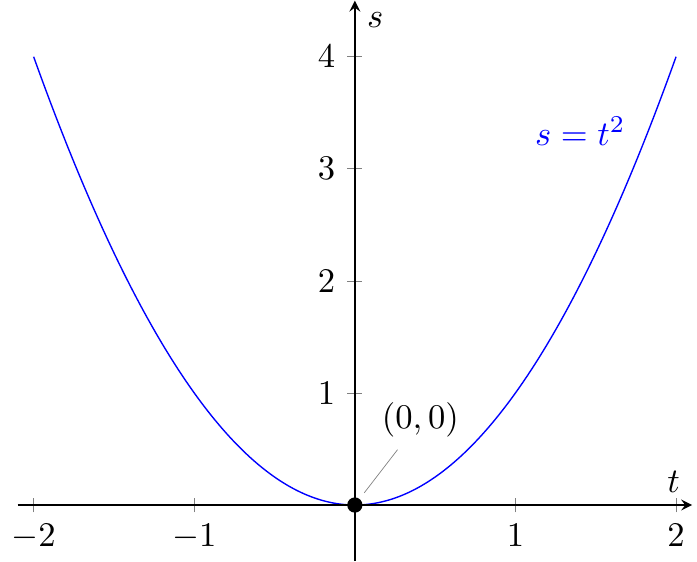}
		  \caption{Parent-function}
		  \label{fig:parent}
	\end{subfigure}
	\begin{subfigure}{.4\textwidth}
		  \centering
		  \includegraphics
		  	{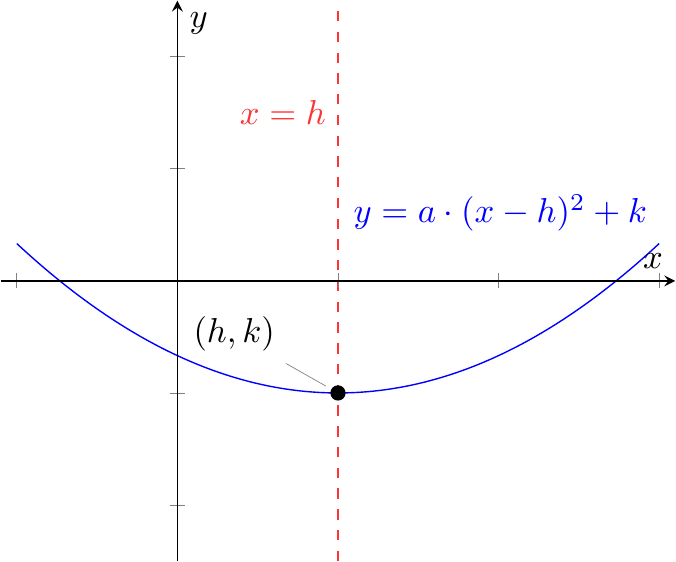}
		  \caption{Example transformed-function}
		  \label{fig:trans}
	\end{subfigure}
	\caption{Typical transformation of a parabolic graph (with $a>0$)}
	\label{fig:transparabola}
\end{figure}

Advantages of the standard form given in \ref{eq:standard}
include analysis of the behavior of the standard form quadratic
as a transformation of a basic squaring function,
as indicated in Figure \ref{fig:transparabola}.
This results in the identification of
horizontally-shifted reflective symmetry
in the graph of $y=f(x),$
about the axis-of-symmetry $x=h$
where we find the location of vertex $(h,k)$
(i.e. the point corresponding to the location of the minimum of the parent function $g$)
and the extreme value of the function $f$
of $k=f(h).$
Furthermore, the identification of the extremum of a quadratic
directly relates to the identification of the vertex of the function's parabolic graph.
Hence, the standard form of a quadratic function
is extremely useful in precalculus optimization applications
that rely on quadratic form models,
and delays the need for development of calculus-based optimization techniques.

\subsubsection{Transformations of Parabolas: Visualization of Cases for $x-$intercepts}
\label{sec:trans:cases}

Inspired by the transformed graph example in Figure \ref{fig:trans},
through consideration of variations of
vertical-scaling factor $a\in(-\infty,0)\cup(0,\infty)$
and
vertical-shifting factor $k\in\mathbb{R},$
and without loss of generality for choice of horizontal-shifting factor $h\in\mathbb{R},$
we produce results as seen in Figures \ref{fig:casesapos} and \ref{fig:casesaneg}.

With the reasonable assumption of a continuous dependence on parameters,
these cases clearly indicate the possibility of
two distinct $x-$intercepts (as in Figures \ref{fig:posanegk} and \ref{fig:negaposk}, and labeled as $r,s\in\mathbb{R}$),
one intercept, or none.
Of course, the expectation is that the study of the distinctions between these cases
will lead to development of the discriminant of the quadratic,
but at this point we merely use the visualization
to motivate the following insight:
\begin{quote}\label{insight}
	If the graph of a quadratic function written in standard form
	is to have distinct real $x-$intercepts ($r,s\in\mathbb{R}$),
	they must be equidistant from the axis-of-symmetry ($x=h$).
	So $h=\frac{r+s}{2}$ necessarily.
\end{quote}
It is this insight into the symmetry inherited through the transformation perspective
that we see paralleled in the purely algebraic approach to deriving the quadratic formula.
As we see play out in latter argumentation,
the convenience of identifying the distinct $x-$intercepts
of the graph as $r=h-z$ and $s=h+z$ for some $z\in\mathbb{C}$
will play a significant role in generalizing from the case of real quadratic roots
to the case of non-real quadratic roots.
Additionally, when continuously
parametrizing quadratic transformations
based on $z\in\mathbb{R}\cup i\mathbb{R},$
one can observe the transition between
cases such as in Figure \ref{fig:posanegk} when $z\in (0,\infty)$
to those in \ref{fig:posaposk} when  $z=i\xi$ for $\xi\in (0,\infty),$
with the critical case of \ref{fig:posazerok} for $z=0.$

\begin{figure}[h]
	\begin{subfigure}{.3\textwidth}
		  \centering
		  \includegraphics
		  	{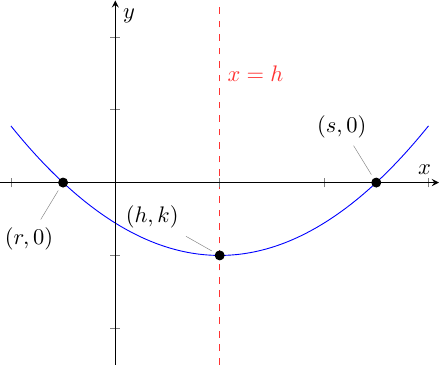}
		  \caption{$k<0$}
		  \label{fig:posanegk}
	\end{subfigure}
	\begin{subfigure}{.3\textwidth}
		  \centering
		  \includegraphics
		  	{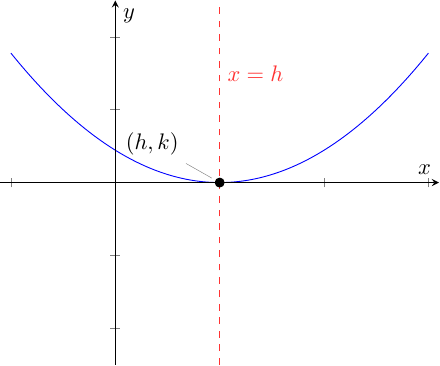}
		  \caption{$k=0$}
		  \label{fig:posazerok}
	\end{subfigure}
	\begin{subfigure}{.3\textwidth}
		  \centering
		  \includegraphics
		  	{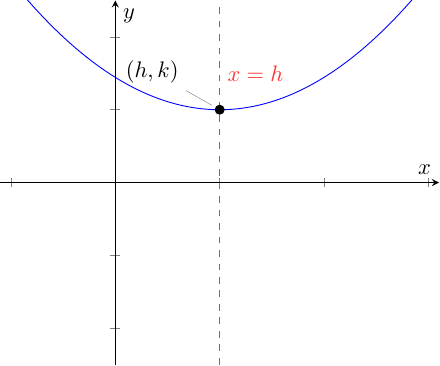}
		  \caption{$k>0$}
		  \label{fig:posaposk}
	\end{subfigure}
	\caption{Example cases of a transformed quadratic $f(x)=a\cdot(x-h)^2+k$ with $a>0$}
	\label{fig:casesapos}
\end{figure}

\begin{figure}[h]
	\begin{subfigure}{.3\textwidth}
		  \centering
		  \includegraphics
		  	{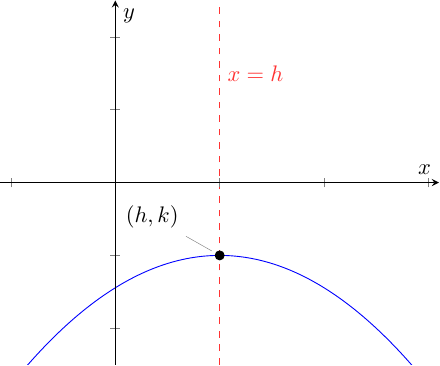}
		  \caption{$k<0$}
		  \label{fig:neganegk}
	\end{subfigure}
	\begin{subfigure}{.3\textwidth}
		  \centering
		  \includegraphics
		  	{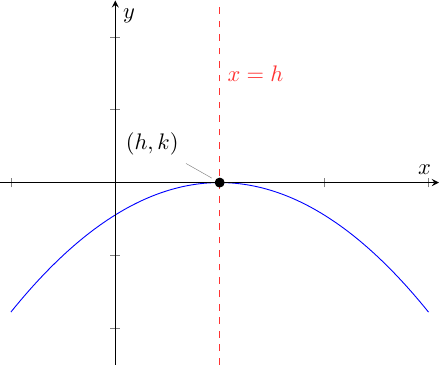}
		  \caption{$k=0$}
		  \label{fig:negazerok}
	\end{subfigure}
	\begin{subfigure}{.3\textwidth}
		  \centering
		  \includegraphics
		  	{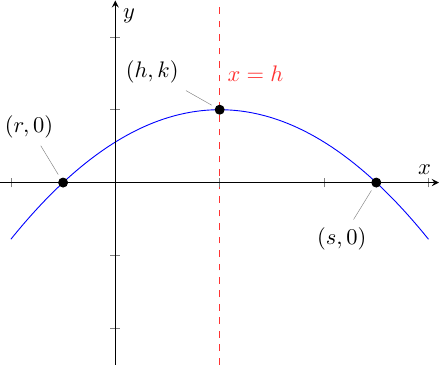}
		  \caption{$k>0$}
		  \label{fig:negaposk}
	\end{subfigure}
	\caption{Example cases of a transformed quadratic $f(x)=a\cdot(x-h)^2+k$ with $a<0$}
	\label{fig:casesaneg}
\end{figure}

\subsection{Towards Complex Linear Factorization of Real Quadratics}
\label{sec:factorization}

Having previously encountered the Zero Product Property
on $\mathbb{R}$ (and likely as well on $\mathbb{C}$),
one can infer from cases in Figures \ref{fig:posanegk},\ref{fig:posazerok},\ref{fig:negazerok}, and \ref{fig:negaposk}
that the existence of a standard form for a quadratic
(and hence the transformation perspective on the parabolic graph)
will necessarily imply the existence of a (real) factored form as well.

\begin{definition}[Linear factored form]
	For a (real) quadratic function $f$ given by
	$f(x)=a\cdot x^2+b\cdot x+c,$
	if constants $r,s\in\mathbb{C}$
	satisfy
	\begin{align}  \label{eq:factored}
		f(x)=a\cdot(x-r)(x-s),
	\end{align}
	then this form
	is said to be a \emph{(complex) linear factored form} of $f.$
\end{definition}

In some precalculus presentations (see \cite{RBlitzer}),
what becomes clear soon after the development
of the theory of real quadratics
is the value of complete linear factorization of polynomials
for purposes of solving polynomial equations
and graphing interpretations.
The Fundamental Theorem of Algebra
and corollaries support the goal
of pursuing linear factorization of polynomials
for such purposes,
at least in as much as an existence theorem can support.

Similar to our perspective on the standard form in \ref{eq:standard},
we treat the existence of the factored form
as a proposition to establish.
In fact, it is constructively proving
the existence of such roots $r,s\in\mathbb{C}$
that is equivalent to deriving the quadratic formula.
 
Unlike the unique representation of the standard form,
the factored form has some ambiguity built in
through choice of how to identify a pair of quadratic roots with $r$ and $s,$
in particular once the natural ordering of $\mathbb{R}$
is unable to be analogously applied when $r,s\in\mathbb{C}.$
Still, the standard result of 
complex polynomial factorization
being unique up to ordering still applies.

The non-constructive Fundamental Theorem of Algebra
requires consideration of potentially non-real roots of a real quadratic polynomial
prior to an approach based on appealing to the Factor Theorem 
to extract a complete linear factorization of general degree polynomials.
In deriving a quadratic formula for real quadratic functions,
one is forced to consider non-real roots at some point,
so it is convenient to have at least introduced $\mathbb{C}$
prior to computational derivation of the quadratic formula.

\subsection{A Classic System of Equations}
\label{sec:system}

\begin{example}
	Let $\alpha,\beta\in\mathbb{R}$ be given.
	Solve for $r,s$ such that
	\begin{equation} \label{eq:prodsum}
		\begin{cases}
			\alpha =r\cdot s
			\\
			\beta = r+s
		\end{cases}.
	\end{equation}
\end{example}
The statement of such a simple nonlinear system
hides within it a deeper philosophical prompting
than perhaps is apparent on the surface.
Even in the special case of $\beta=0,$ when $s=-r$ and $r^2=-\alpha,$
there is an imperative to develop
irrational solutions $r\in\mathbb{R}\setminus\mathbb{Q}$
for some choices of $\alpha\in\mathbb{Q}$
(such as for prime $p\in\mathbb{N}$ and $\alpha=-p\in\mathbb{Z}\subset\mathbb{Q}\subset\mathbb{R}).$
Further yet, for real $\alpha>0,$
imaginary solutions $r,s\in\mathbb{C}\setminus\mathbb{R}$
would be needed
to satisfy $-\alpha=r^2.$
The apparent ease of solving the system \eqref{eq:prodsum}
has only been earned through generations
of mathematical discovery and invention.

Yet systems like this are accessible enough to have earned their place
in the collective memory of human history,
having been preserved over the ages in cuneiform and other mediums,
but also entering into the collection of math puzzles
that are likely to be encountered at some point
in the pre-algebra era of primary education.

Eves \cite{HEves}(p. 58)
refers to Babylonian mathematics problems
and a Louvre tablet dating to 300 B.C.
essentially presenting solutions
to two-variable systems
given a known product and sum.
Such systems could be motivated from
geometric interpretations of known area and semi-perimeter.
Eves (p. 87) continues later in discussing
an method attributed to the Pythagoreans in the text section
``Geometric Solution of Quadratic Equations''
in which applications of areas and
appealing to propositions from Euclid's \emph{Elements}
results in a satisfying melding of the geometric and algebraic problems.
The long standing history of connecting systems such as \eqref{eq:prodsum}
and their connection to the study of quadratics and geometric modeling
is clear based on extant mathematical sources.

Solving problems equivalent to solving a quadratic equation
have inspired varied approaches through the history of algebra,
but the nature of products and the connection to areas in basic geometry
make it unsurprising that many approaches take a geometric perspective.
However, the perspective we take
in solving quadratic equations
is informed from development
of function transformation theory,
as we see in \ref{sec:trans:parabola},
and the subsequent symmetry arguments
to which we previously alluded.

\section{A Symmetry Motivated Proof of the Quadratic Formula}
\label{sec:symproof}

\begin{theorem}
	Let $b,c\in\mathbb{R}$ and nonzero $a\in\mathbb{R}$ yield
	real quadratic polynomial function
	\begin{align}\label{eq:gen}
		f(x)&=a\cdot x^2+b\cdot x+c.
	\end{align}
	Then there exists real $h,k\in\mathbb{R}$
	and complex $r,s\in\mathbb{C}$
	such that $f$ has standard form
	\begin{align}\label{eq:stan}
		f(x)&=a\cdot(x-h)^2+k
	\end{align}
	and a linear factored form
	\begin{align}\label{eq:fac}
		f(x)&=a\cdot (x-r)(x-s),
	\end{align}
	where constructively
	\begin{align*}
		h & =-\frac{b}{2a},
		&k & = \frac{4\cdot a\cdot c-b^2}{4a},
		& r,s&=-\frac{b}{2a}\pm \frac{\sqrt{b^2-4\cdot a\cdot c}}{2a}.
	\end{align*}
\end{theorem}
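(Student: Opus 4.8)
The plan is to establish the standard form~\ref{eq:stan} first and then extract the factored form~\ref{eq:fac} from it using the symmetry insight recorded in Section~\ref{sec:trans:cases}. For the standard form, I would read the target identity $a(x-h)^2+k = ax^2+bx+c$ as the demand to exhibit $f$ as a transformation $f(x)=a\cdot g(x-h)+k$ of the parent squaring function $g(t)=t^2$ (so $A=a$, $B=1$, $C=-h$, $D=k$ in the correspondence system~\ref{eq:corsystem}). Expanding $a(x-h)^2+k = ax^2 - 2ah\,x + (ah^2+k)$ and matching the coefficient of $x$ and the constant term against~\ref{eq:gen} forces $-2ah=b$ and $ah^2+k=c$; since $a\neq 0$, these determine $h=-\frac{b}{2a}$ and then $k = c-ah^2 = \frac{4ac-b^2}{4a}$, and substituting these back confirms~\ref{eq:stan} holds (so the standard form not only exists but is unique).

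With the standard form in hand, I would invoke the observation that any pair of roots must sit symmetrically about the axis $x=h$, and accordingly posit $r=h-z$, $s=h+z$ for an as-yet-undetermined $z$. The candidate factored form then collapses by difference of squares,
\begin{align*}
	a(x-r)(x-s) &= a\bigl((x-h)+z\bigr)\bigl((x-h)-z\bigr) = a(x-h)^2 - az^2,
\end{align*}
so comparing this against~\ref{eq:stan} reduces the entire problem to solving the single equation $-az^2 = k$, i.e.\ $z^2 = -\frac{k}{a} = \frac{b^2-4ac}{4a^2}$.

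The crux is therefore the existence of such a $z$ in $\mathbb{C}$, and this is the only place I expect any genuine subtlety. Since $a,b,c\in\mathbb{R}$, the quantity $b^2-4ac$ is a real number, and one argues by its sign: if $b^2-4ac\ge 0$, take the nonnegative real square root and set $z=\frac{\sqrt{b^2-4ac}}{2a}$; if $b^2-4ac<0$, set $z=\frac{i\sqrt{4ac-b^2}}{2a}$ — uniformly abbreviated as $z=\frac{\sqrt{b^2-4ac}}{2a}\in\mathbb{C}$. In either case $z^2=\frac{b^2-4ac}{4a^2}$, so $-az^2=k$ and the expanded factored form agrees with the standard form, hence with $f$ itself. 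Reading off $r=h-z$ and $s=h+z$ yields $r,s=-\frac{b}{2a}\pm\frac{\sqrt{b^2-4ac}}{2a}$, which is precisely the quadratic formula; the choice of square root and the resulting $\pm$ is exactly the harmless relabeling of the unordered pair $\{r,s\}$ noted before the theorem.
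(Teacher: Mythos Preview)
Your argument is correct and uses the same core ingredients as the paper---coefficient matching, the symmetric ansatz $r=h-z,\ s=h+z$, and the resulting difference of squares---but you sequence them differently. You first pin down the standard form completely (both $h$ and $k$) by matching \eqref{eq:gen} against the expansion of \eqref{eq:stan}, and only then pass to the factored form via the clean identity $a(x-r)(x-s)=a(x-h)^2-az^2$, which reduces everything to $z^2=-k/a$. The paper instead interleaves the two forms: it reads off $h=-\frac{b}{2a}$ from the $x$-coefficient across all three forms simultaneously, then obtains $z^2$ by equating the constant term $c=a\cdot(r\cdot s)=a(h^2-z^2)$---essentially Vieta's relation $rs=c/a$, which it pauses to display as the classical sum--product system \eqref{eq:abcprodsum}---and only computes $k$ at the very end. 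Your route is a bit more streamlined and modular; the paper's ordering buys the explicit connection to the ancient product--and--sum problem of Section~\ref{sec:system} that motivates the whole exposition.
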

\begin{proof}
	Let $a,b,c\in\mathbb{R}$ with $a\neq 0,$ and $f$ be given by $f(x)=a\cdot x^2+b\cdot x+c.$

	Two polynomial functions of equal degree are equivalent
	precisely when every coefficient of their general form correspond and are numerically equal.
	Through normalization of \eqref{eq:gen},\eqref{eq:stan}, and \eqref{eq:fac},
	and comparison of equivalent monic quadratic polynomials,
	it is clear the lead coefficient ($a$) across any equivalent forms must be equal.
	We proceed under this conclusion.

	For any transformed parabolic function graph given by standard form \eqref{eq:stan},
	if it is presumed equivalent to a factored form \eqref{eq:fac}
	and general form \eqref{eq:gen},
	then we have three cases for existence of $x-$intercepts:
	\begin{enumerate}[label=\bfseries \roman*.)]
		\item The graph of $y=f(x)$ has distinct $x$-intercepts at $(r,0)$ and $(s,0)$
			that are equidistant from the axis of symmetry $x=h,$
		\item the graph of $y=f(x)$ has a single $x$-intercept at the vertex with $(r,0)=(h,k),$ or
		\item the graph of $y=f(x)$ has no real $x$-intercepts, and $r,s\in\mathbb{C}\setminus\mathbb{R}.$ 
	\end{enumerate}
	
	From our cases above, it is clear for such $h\in\mathbb{R}$ and $r,s\in\mathbb{C}$ to exist,
	it must hold by symmetry of the transformed parabola that
	whenever the graph $y=f(x)$ has precisely one or two $x-$intercepts, then necessarily
	\[ h=\frac{r+s}{2}.\]
 	When there are no $x-$intercepts and $r,s\notin \mathbb{R}$,
 	then $r,s$ must form a complex conjugate pair with $s=\overline{r},$
 	as expansion of presumed equivalent forms \eqref{eq:stan} and \eqref{eq:fac}
 	into a general form given by \eqref{eq:gen}
 	requires the coefficient of $x$ in each expansion equates
	\begin{align}\label{eq:xcoeff}
		b & = -a\cdot 2h = a\cdot (-r-s),
	\end{align}
	and the average of the roots $r,s$
	must be equal to the horizontal shifting $h,$
	as previously concluded in the distinct intercept case.
	
	Based on \eqref{eq:xcoeff},
	if a standard form for a general quadratic \eqref{eq:gen} exists,
	then it must hold that the axis of symmetry $x=h$ satisfies
	\begin{align}\label{eq:axis}
		h & = -\frac{b}{2a}.
	\end{align}
	Furthermore, as (complex) roots $r,s$ must have real average $h,$
	and without loss of generality assuming $s>r$ when they are real and distinct,
	we may write both roots in a (real or complex) symmetric form
	\begin{align}\label{eq:symmz}
		r & =h-z,	
		&
		s & =h+z,
	\end{align}
	where $z\in [0,\infty)\cup i(0,\infty),$
	depending on whether we encounter the cases of
	\begin{enumerate}[label=\bfseries \roman*.)]
		\item distinct $x$-intercepts at $(r,0)$ and $(s,0),$ where $z=\frac{s-r}{2}\in (0,\infty),$ 
		\item a single $x$-intercept, where $z=0,$ or 
		\item no real $x$-intercepts, where $z=i\xi$ with $\xi\in(0,\infty).$
	\end{enumerate}
	The symmetric form of \eqref{eq:symmz} then allows us to write
	the product $r\cdot s$ as a difference of squares:
	\begin{align}\label{eq:rsprodz}
		r\cdot s & =h^2-z^2.
	\end{align}
	
	At this point, we have exhausted the extent to which 
	the axis-of-symmetry and linear coefficient analysis can produce results readily,
	and we begin consideration of the quadratic function's extremum ($k$) at the parabola's vertex.
	Returning to expansions of each form,
	with attention given now to the constant term,
	and by subsequently by using \eqref{eq:rsprodz},
	we conclude that the following must hold:
	\begin{align}\label{eq:constant}
		c  = a\cdot h^2+k &= a\cdot (r\cdot s) \\
		& = a\cdot (h^2-z^2). \nonumber
	\end{align}
	
	It is also at this point that we see a nonlinear system for $r,s$
	based on parameters $a,b,c$
	arise based on the correspondence of constant terms in \eqref{eq:constant}
	and the linear coefficient terms in \eqref{eq:xcoeff}.
	Respectively isolating the product and sum of proposed roots $r,s$
	in these results yields 	a classic system
	known as an example of Vieta's formulas \cite{cao2020quadratic}
	(as formulaic representations of polynomial coefficients with respect to roots).
	The resulting system
	\begin{equation} \label{eq:abcprodsum}
		\begin{cases}
			\hfill\frac{c}{a} &=r\cdot s
			\\ \\
			-\frac{b}{a} &= r+s
		\end{cases},
	\end{equation}
	is of the form discussed in \ref{sec:system}.
	For the purpose of identification of quadratic roots
	or $x-$intercepts of a parabola,
	the equivalence of using the quadratic formula
	to solving this system should be recognized.
	In fact, it is almost this very system which is under consideration
	when the algebraic task of quadratic factorization
	is posed in the typical algebra course.
	Following normalization to a monic quadratic,
	the imperative to find two numbers
	with a given sum and product is familiar,
	though rarely applied for complex solutions.
	
	That said, our purposes go beyond merely deriving quadratic roots,
	but also include relating said roots to the parabola's vertex
	when written in standard form,
	so we continue with those goals in mind.
	
	By isolating the $z^2$ term in \eqref{eq:constant},
	a substitution of the axis-of-symmetry result \eqref{eq:axis} yields
	\begin{align}\label{eq:z2}
		z^2 & = \frac{a\cdot h^2-c}{a}
			= \frac{a\cdot \left(\frac{-b}{2a}\right)^2-c}{a}
			= \frac{b^2-4\cdot a\cdot c}{4a^2}.
	\end{align}
	At this point, since $a,b,c\in\mathbb{R}$ by assumption,
	we have $z^2\in\mathbb{R}$ necessarily.
	However, it is precisely the numerator in \eqref{eq:z2}
	(that is, traditionally referred to as the discriminant $\Delta=b^2-4\cdot a\cdot c$)
	which distinguishes between our cases of:
	\begin{enumerate}[label=\bfseries \roman*.)]
		\item distinct $x-$intercepts as in Figures \ref{fig:disc:anegdelpos} \ref{fig:disc:aposdelpos}
			and distinct real roots $r,s=h\pm z = -\frac{b}{2a}\pm \frac{\sqrt{b^2-4\cdot a\cdot c}}{2a},$
						
		\item and a single $x-$intercept as in Figures \ref{fig:disc:anegdelzero} \ref{fig:disc:aposdelzero}
			 and a double real root $r=s=h\pm z = -\frac{b}{2a}\pm \frac{\sqrt{0}}{2a} = -\frac{b}{2a},$
			
		\item or no $x-$intercepts) as in Figures \ref{fig:disc:anegdelneg} \ref{fig:disc:aposdelneg}
			and non-real conjugate roots  $r,s=h\pm z = -\frac{b}{2a}\pm i\cdot \frac{\sqrt{4\cdot a\cdot c-b^2}}{2a},$	
	\end{enumerate}
	
	\begin{figure}[h]
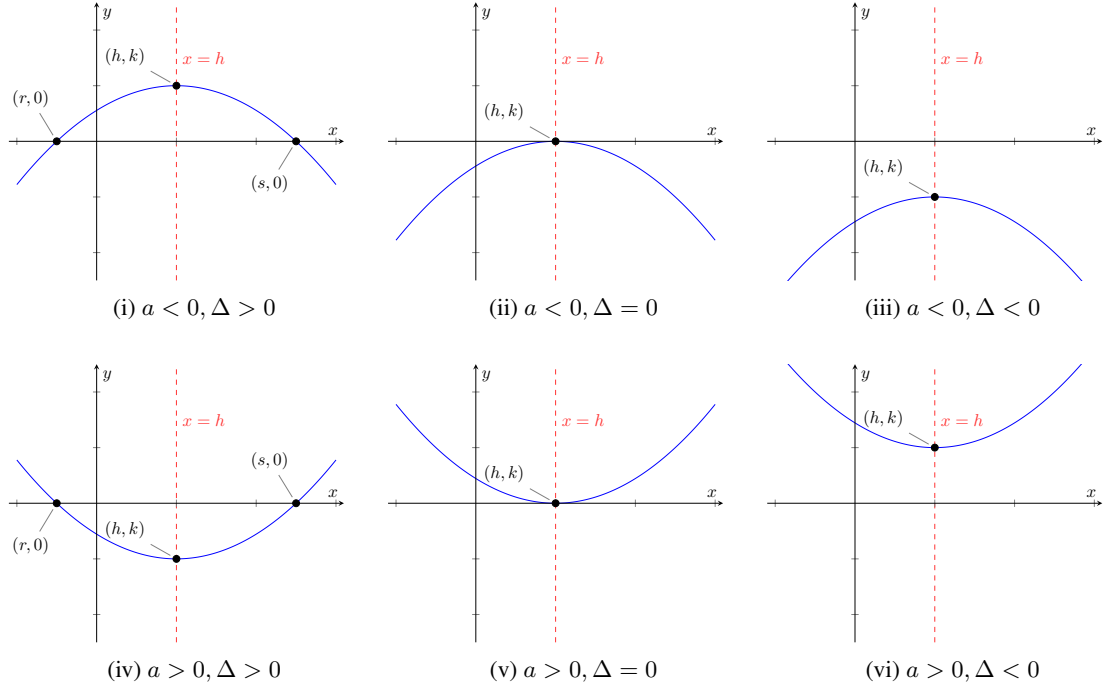
\centering
			\begin{subfigure}{.3\textwidth}
				  \includegraphics
				  	{standalone/figneg3.pdf}
				  \caption{$a<0,\Delta>0$}
				  \label{fig:disc:anegdelpos}
			\end{subfigure}
			\begin{subfigure}{.3\textwidth}
				  \includegraphics
				  	{standalone/figneg2.pdf}
				  \caption{$a<0,\Delta=0$}
				  \label{fig:disc:anegdelzero}
			\end{subfigure}
			\begin{subfigure}{.3\textwidth}
				  \includegraphics
				  	{standalone/figneg1.pdf}
				  \caption{$a<0,\Delta<0$}
				  \label{fig:disc:anegdelneg}
			\end{subfigure}
			
			\vspace{0.2in}
			
			\begin{subfigure}{.3\textwidth}
				  \includegraphics
				  	{standalone/figpos1.pdf}
				  \caption{$a>0,\Delta>0$}
				  \label{fig:disc:aposdelpos}
			\end{subfigure}
			\begin{subfigure}{.3\textwidth}
				  \includegraphics
				  	{standalone/figpos2.pdf}
				  \caption{$a>0,\Delta=0$}
				  \label{fig:disc:aposdelzero}
			\end{subfigure}
			\begin{subfigure}{.3\textwidth}
				  \includegraphics
				  	{standalone/figpos3.pdf}
				  \caption{$a>0,\Delta<0$}
				  \label{fig:disc:aposdelneg}
			\end{subfigure}
			
			\caption{Discriminant $\Delta=b^2-4\cdot a\cdot c,$ cases $\Delta >0,\Delta=0,\Delta<0$}
			\label{fig:disc}
			\end{figure}
	
	Up to case-based insight into the nature of solutions $z$ to \eqref{eq:z2},
	where purely imaginary solutions arise when discriminant $b^2-4\cdot a\cdot c<0$
	and we are required to interpret square roots of a negative number,
	we see that all cases follow the same formula - the quadratic formula:
	\[r,s =  -\frac{b}{2a}\pm \frac{\sqrt{b^2-4\cdot a\cdot c}}{2a},\]
	where we choose to retain the emphasis of the symmetry
	about axis of symmetry $x=h=-\frac{b}{2a}.$
	
	Having formulaically constructed $h,r,s$ using parameters $a,b,c,$
	all that remains to finish our simultaneous constructive proof
	of both factored and standard forms of quadratics
	is to construct extremum $k.$
	Rather than the typical approach of merely evaluating
	$k=f\left(-\frac{b}{2a}\right)$ in the general form of $f,$
	we instead again appeal to coefficient correspondence of terms in our expansions.
	By \eqref{eq:constant},
	it is clear that we may solve for
	\begin{align*}
		k 	 = c-a\cdot h^2
			& = -a\cdot z^2 \\
			& = -a\cdot \left(\frac{b^2-4\cdot a\cdot c}{4a^2}\right) = \frac{4\cdot a\cdot c-b^2}{4a},
	\end{align*}
	where $a,b,c\in\mathbb{R}$ imply $k\in\mathbb{R}.$
\end{proof}

\begin{coro}[The Quadratic Formula]
	Let $b,c\in\mathbb{R}$ and non-zero $a\in\mathbb{R}.$
	Then the quadratic equation
	\begin{align}\label{eq:realquad}
		0&=a\cdot x^2 +b\cdot x + c
	\end{align}
	has solutions in $\mathbb{C}$ of
	\[ x=-\frac{b}{2a}\pm \frac{\sqrt{b^2-4\cdot a\cdot c}}{2a}.\]
\end{coro}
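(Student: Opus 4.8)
The plan is to obtain the quadratic formula as an essentially one-line consequence of the Theorem just proved. First I would apply the Theorem to the given coefficients $a,b,c$ (with $a\neq 0$) to produce the linear factored form $a\cdot x^2+b\cdot x+c=a\cdot(x-r)(x-s)$, where $r,s=-\frac{b}{2a}\pm\frac{\sqrt{b^2-4\cdot a\cdot c}}{2a}$. Under this identification, the quadratic equation \eqref{eq:realquad} is equivalent to $0=a\cdot(x-r)(x-s)$, so the problem of solving a general real quadratic equation has been reduced to solving an already-factored equation.

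Next I would invoke the Zero Product Property on $\mathbb{C}$. Since $a\neq 0$, the equation $a\cdot(x-r)(x-s)=0$ holds if and only if $(x-r)(x-s)=0$, and because $\mathbb{C}$ has no zero divisors, this holds if and only if $x-r=0$ or $x-s=0$, that is, $x=r$ or $x=s$. This yields exactly the two claimed values $x=-\frac{b}{2a}\pm\frac{\sqrt{b^2-4\cdot a\cdot c}}{2a}$ as the only candidates for solutions.

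Finally I would note that these candidates are genuine solutions and not merely necessary conditions: substituting $x=r$ or $x=s$ into $a\cdot(x-r)(x-s)$ returns $0$, so the solution set of \eqref{eq:realquad} in $\mathbb{C}$ is precisely $\{r,s\}$. I do not expect any real obstacle here, since all of the substantive content — including the case analysis on the discriminant $\Delta=b^2-4\cdot a\cdot c$, which distinguishes two distinct real roots ($\Delta>0$), a repeated real root ($\Delta=0$), and a conjugate pair of non-real roots ($\Delta<0$) — has already been established in the proof of the Theorem. The corollary is thus an immediate application of the factored form together with the Zero Product Property.
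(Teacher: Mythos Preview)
Your proposal is correct and matches the paper's approach exactly: the paper's proof is the single sentence ``By the Zero Product Property of $\mathbb{C},$ roots $r,s$ in the (complex) factorization given in \eqref{eq:fac} satisfy \eqref{eq:realquad},'' and you have simply spelled out that same argument in more detail. Your additional remarks on the if-and-only-if direction and the discriminant cases are fine but not needed for the corollary as stated.
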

\begin{proof}
	By the Zero Product Property of $\mathbb{C},$
	roots $r,s$ in the (complex) factorization given in \eqref{eq:fac}
	satisfy \eqref{eq:realquad}.
\end{proof}
Furthermore, as coefficients
$a,b,c\in\mathbb{R}\subset\mathbb{C},$
a field, the polynomial ring $\mathbb{C}[x]$
is a Euclidean Domain (with Euclidean valuation of polynomial degree)
which admits a division algorithm,
and is therefore a Unique Factorization Domain.  (See Judson \cite{TJudson} or \cite{Judson}, Chapter 18.)
Hence, the roots $r,s\in\mathbb{C}$ and associated factors are unique up to reordering.

\section{Comments on Quadratic Formula Extensions and Limitations}
\label{sec:ext}

As much as the historical study of quadratics
prompted an expansion of the number system from
$\mathbb{Q}$ to $\mathbb{R}$ to $\mathbb{C},$
of interest to many is how quadratic polynomial structures
in other mathematical systems (numerical or otherwise)
may have constructive proofs of zeros (roots) or appropriate analogs.

An advanced undergraduate development of abstract algebra
tends to resolve discussions of quadratic formula extensions
within the context of fields,
and prior to expansion into the Galois-theory results
which prove non-existence
of coefficient based radical formula for roots
for general degree polynomials (i.e. for quintic or greater).

\subsection{Quadratics in polynomial rings over fields}

\begin{prop}
	Let $\mathbb{F}$ be a field.
	Let $b,c\in\mathbb{F}$ and non-zero $a\in\mathbb{F}.$
	Then the quadratic equation
	\begin{align}\label{eq:fieldquad}
		0&=a\cdot x^2 +b\cdot x + c
	\end{align}
	has solutions
	in splitting field $\mathbb{F}(z)\supseteq \mathbb{F},$
	for some $z$ satisfying $z^2 =\frac{b^2-4\cdot a\cdot c}{4a^2}.$
	Solutions $x\in \mathbb{F}(z)$ are then of the form
	\begin{align}\label{eq:fieldroots}
		x&=-\frac{b}{2a}\pm z.
	\end{align}
\end{prop}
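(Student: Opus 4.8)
The plan is to re-run the argument of the Theorem in Section~\ref{sec:symproof} almost verbatim, with $\mathbb{R}$ replaced by the abstract field $\mathbb{F}$ and $\mathbb{C}$ replaced by the field extension $\mathbb{F}(z)$. First I would normalize \eqref{eq:fieldquad} by multiplying through by $a^{-1}$, which exists since $a\neq 0$ in the field $\mathbb{F}$, obtaining the monic equation $x^2+\tfrac{b}{a}x+\tfrac{c}{a}=0$. Then, exactly as the symmetry/translation step produced \eqref{eq:z2}, I would perform the change of variable $x=w-\tfrac{b}{2a}$; expanding $a\bigl(w-\tfrac{b}{2a}\bigr)^2+b\bigl(w-\tfrac{b}{2a}\bigr)+c$ and collecting terms cancels the linear term in $w$ and reduces the equation to the pure quadratic
\[
	w^2=\frac{b^2-4\cdot a\cdot c}{4a^2},
\]
whose right-hand side I will abbreviate as $\delta\in\mathbb{F}$.

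Next I would take $z$ to be a root of $T^2-\delta\in\mathbb{F}[T]$ inside a splitting field of that polynomial over $\mathbb{F}$, so that by construction $z^2=\delta$. If $\delta$ is already a square in $\mathbb{F}$ then $\mathbb{F}(z)=\mathbb{F}$; otherwise $[\mathbb{F}(z):\mathbb{F}]=2$. In either case, in the field $\mathbb{F}(z)$ the equation for $w$ becomes $w^2=z^2$, i.e. $(w-z)(w+z)=0$, so $w=z$ or $w=-z$, and the candidate solutions of \eqref{eq:fieldquad} are $x=-\tfrac{b}{2a}\pm z\in\mathbb{F}(z)$, as claimed in \eqref{eq:fieldroots}. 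A direct substitution then confirms these are genuine roots: $a\bigl(-\tfrac{b}{2a}+z\bigr)^2+b\bigl(-\tfrac{b}{2a}+z\bigr)+c=a z^2-\tfrac{b^2-4ac}{4a}=a\delta-a\delta=0$, and the $-z$ branch is identical. Finally, writing $z=\tfrac{s-r}{2}$ with $r,s$ the two roots just found, any intermediate field over $\mathbb{F}$ containing $r$ and $s$ must contain $z$; together with the containment already shown this identifies $\mathbb{F}(z)$ as precisely the splitting field of $a x^2+bx+c$ over $\mathbb{F}$, and since $\mathbb{F}(z)[x]$ is a polynomial ring over a field and hence a Unique Factorization Domain, the factorization $a(x-r)(x-s)$ is unique up to reordering, just as in the remark following the Corollary.

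The step I expect to be the genuine obstacle --- or, more precisely, the hypothesis that must be inserted for the statement even to parse --- is the division by $2a$. If $\mathrm{char}\,\mathbb{F}=2$ then $2a=0$, the symbols $\tfrac{b}{2a}$ and $\tfrac{b^2-4ac}{4a^2}$ are undefined, and the translation $x=w-\tfrac{b}{2a}$ that decouples the linear term is simply unavailable; indeed in characteristic $2$ the squaring map is additive, so solving $x^2+bx+c=0$ with $b\neq 0$ is governed by Artin--Schreier theory (roots of $\wp(t)=t^2+t$) rather than by any square-root formula. Consequently the proposition should be read under the standing assumption $\mathrm{char}\,\mathbb{F}\neq 2$; granted that, every step above is a routine field computation and the argument of Section~\ref{sec:symproof} transfers without change.
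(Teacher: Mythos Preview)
Your argument is correct and self-contained. The paper, however, does \emph{not} prove this proposition at all: immediately after stating it, the author writes ``Thomas Judson develops the theory of splitting fields for polynomials of general degree in \cite{TJudson} (and \cite{Judson}) (Chapter 21), and we leave the proof of formal theory to such works.'' So there is no paper-proof to compare against; you have supplied what the author chose to outsource.

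Two remarks are still worth making. First, although you frame your argument as re-running Section~\ref{sec:symproof} ``almost verbatim,'' what you actually do is the direct translation $x=w-\tfrac{b}{2a}$ followed by expansion---i.e.\ completing the square. The paper's Section~\ref{sec:symproof} instead posits the standard and factored forms, matches coefficients to obtain $h=-\tfrac{b}{2a}$ and $r\cdot s=h^2-z^2$, and only then solves for $z^2$. The two routes land on the same identity \eqref{eq:z2}, but yours is the more classical derivation, not the symmetry/coefficient-comparison one the paper advertises. Either transfers to a general field without difficulty.

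Second, your observation about $\mathrm{char}\,\mathbb{F}=2$ is a genuine correction: the proposition as stated is ill-posed in characteristic~$2$ because $\tfrac{b}{2a}$ and $\tfrac{b^2-4ac}{4a^2}$ are undefined, and your pointer to Artin--Schreier theory is the right replacement. The paper is silent on this for fields, though it does impose the analogous condition $\gcd(2a,n)=1$ in the modular setting of Proposition~\ref{prop:modquad}. Your caveat should really be a standing hypothesis in the statement itself.
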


Thomas Judson develops the theory of splitting fields
for polynomials of general degree
in \cite{TJudson} (and \cite{Judson}) (Chapter 21),
and we leave the proof of formal theory to such works.

In the case of $\mathbb{F}=\mathbb{R},$
this just restates that either a real quadratic has real roots (with non-negative discriminant)
or a discriminant $\Delta<0$ implies roots
$x\in\mathbb{R}\left(i\cdot \frac{\sqrt{-\Delta}}{2a}\right)=\mathbb{R}(i)=\mathbb{C}.$\

This result is more interesting in a field such as $\mathbb{F}=\mathbb{Q}$
with irreducible quadratics like $x^2-2x-2$
and roots $1\pm \sqrt{3} \in \mathbb{Q}(\sqrt{3})\subset \mathbb{C}.$
Such considerations also remind
of the ancient controversy over existence of irrational numbers
as a foreshadowing of the controversy of developing imaginary numbers.

Also of interest are the polynomial rings generated by finite fields,
such as modular arithmetic fields
$\mathbb{F}=\mathbb{Z}_p \cong \mathbb{Z}\slash \langle p \rangle$
for prime modulus $p.$
In such cases, many authors will take additional care
to clarify the operation of division in the fraction form of \eqref{eq:fieldroots}
may be better understood as modular multiplication
by the multiplicative inverse $\beta\in\mathbb{Z}_p$ to $2a\in\mathbb{Z}_p$
such that $\beta\cdot (2a) \equiv 1\ (mod\ p).$
Though in a prime modulus
this may seem little more than a semantic distinction,
in attempts to extend to quadratic congruences for composite moduli
it may be more troublesome to clearly convey results
without explicitly adapting notation.

\subsection{General Quadratic Congruences}

\begin{prop}\label{prop:modquad}
	Let modulus $n\in\mathbb{N},$
	modular arithmetic ring $R=\mathbb{Z}_n,$
	with $b,c\in R$ and (nonzero) unit $a\in R.$
	Then the quadratic congruence
	\begin{align}\label{eq:modquad}
		0&\equiv a\cdot x^2 +b\cdot x + c\ (mod\ n)
	\end{align}
	has solutions provided $gcd(2a,n)=1,$
	and the discriminant $\Delta \equiv b^2-4\cdot a\cdot c$
	is a quadratic residue modulo $n.$
	The solutions $x\in\mathbb{Z}_n$ to \eqref{eq:modquad} are of the form
	\[ x \equiv -b\cdot (2a)^{-1}+z\ (mod\ n)\]
	where $z\equiv s\cdot (2a)^{-1}$
	for solutions $s\in\mathbb{Z}_n$ to $s^2\equiv \Delta.$
\end{prop}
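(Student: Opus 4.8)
The plan is to mimic the algebraic steps of the real-case theorem, but carried out entirely in the commutative ring $R=\mathbb{Z}_n$, being careful that the only ``division'' performed is by $2a$, which we have arranged to be a unit. First I would observe that the hypothesis $\gcd(2a,n)=1$ makes $2a$ a unit in $\mathbb{Z}_n$, so its inverse $(2a)^{-1}$ exists; denote it $\mu$, so $\mu\cdot(2a)\equiv 1\ (mod\ n)$. (Note $a$ itself is assumed a unit, and $\gcd(2,n)=1$ follows from $\gcd(2a,n)=1$, so $2$ is a unit as well; it is really the invertibility of $2$ that is the delicate point when $n$ is even — here it is excluded.) Set $h\equiv -b\mu\ (mod\ n)$, the modular analogue of $-\tfrac{b}{2a}$.

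The key algebraic identity is the ``completing the square'' step, which I would verify directly rather than by analogy: since $2$ and $a$ are units, multiplying the congruence \eqref{eq:modquad} by $4a$ (also a unit) gives
\begin{align*}
	0 &\equiv 4a^2 x^2 + 4ab\,x + 4ac \equiv (2ax+b)^2 - (b^2-4ac)\ (mod\ n),
\end{align*}
so \eqref{eq:modquad} is equivalent to $(2ax+b)^2 \equiv \Delta\ (mod\ n)$ with $\Delta\equiv b^2-4ac$. Hence a solution $x$ exists precisely when $\Delta$ lies in the image of the squaring map on $\mathbb{Z}_n$, i.e.\ when $\Delta$ is a quadratic residue modulo $n$, as claimed. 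Given such a residue, pick $s\in\mathbb{Z}_n$ with $s^2\equiv\Delta$; then $2ax+b\equiv s$, and multiplying through by the unit $\mu=(2a)^{-1}$ yields $x\equiv \mu s - \mu b \equiv h + z\ (mod\ n)$ where $z\equiv s\mu \equiv s\cdot(2a)^{-1}$. Conversely, any $x$ of this form satisfies $(2ax+b)^2\equiv(s)^2\equiv\Delta$, hence solves \eqref{eq:modquad}; this gives the ``$\pm$'' in the real case because if $s$ is one square root of $\Delta$ then $-s$ is another, producing the two expressions $h\pm z$ (which may coincide, e.g.\ when $\Delta\equiv 0$ or when $s\equiv -s$).

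The step I expect to require the most care is not any single computation but the bookkeeping around \emph{which} cancellations are legitimate: in $\mathbb{Z}_n$ for composite $n$ one cannot cancel arbitrary factors, so every passage from $u\cdot y \equiv u\cdot y'$ to $y\equiv y'$ must be justified by $u$ being a unit — and the three relevant multipliers $2$, $a$, and therefore $4a$ and $(2a)$ are all units exactly under the stated hypothesis $\gcd(2a,n)=1$ together with $a$ a unit. A secondary subtlety worth a sentence is that the solution set need not have exactly one or two elements: the congruence $s^2\equiv\Delta$ can have many roots modulo a composite $n$ (by CRT, $2^{\omega}$ of them in the generic odd case, where $\omega$ is the number of distinct prime divisors), so the proposition's phrase ``solutions of the form $x\equiv -b(2a)^{-1}+z$'' should be read as ranging over all such $z$, and I would state this explicitly to avoid the false impression of a unique pair. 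No appeal to splitting fields is available here since $\mathbb{Z}_n$ is not a field; the whole argument is the elementary identity above plus unit-cancellation, which is why it goes through for rings rather than just fields.
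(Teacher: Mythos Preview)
Your argument is correct: multiplying \eqref{eq:modquad} through by the unit $4a$ to obtain $(2ax+b)^2\equiv\Delta\ (mod\ n)$, and then inverting $2a$, is exactly the right elementary route, and your care about which cancellations are legitimate (only by units) and about the possible multiplicity of square roots of $\Delta$ for composite $n$ is well placed.

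For comparison with the paper: the paper does not actually supply a proof of this proposition. It simply cites Crisman's development of quadratic residues and congruences and remarks that the statement has been notationally adapted to match the symmetry viewpoint of Section~\ref{sec:symproof}. So your write-up goes strictly beyond what the paper itself provides. If anything, your completing-the-square identity $4a(ax^2+bx+c)=(2ax+b)^2-\Delta$ is the cleanest way to carry the paper's transformation/symmetry philosophy into $\mathbb{Z}_n$, since it makes transparent that the only obstruction to the real-case derivation is the invertibility of $2a$ together with $\Delta$ being a square---precisely the two hypotheses the proposition imposes. One minor redundancy worth trimming: the assumption $\gcd(2a,n)=1$ already forces $a$ to be a unit, so the separate hypothesis ``$a$ a unit'' is not doing independent work in your argument.
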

	
	Karl-Dieter Crisman develops the theory of quadratic residues, congruences, and reciprocity
	in \cite{KDCrisman} (Chapters 16-17),
	which we will not recreate here.
	We have notationally adapted statements of results there
	to align with the symmetry approaches in \ref{sec:symproof}. 

	The validity of modular multiplicative inversion of $2a\in R$
	is dependent on both $2,a$ being units in $R,$
	which is in turn dependent on $n$ being odd and relatively prime to $2a.$
	Furthermore, the restriction on discriminant $\Delta$
	to be a quadratic residue (i.e. a perfect square modulo $n$)
	is analogous to the non-negative discriminant restriction on real quadratics
	when seeking real quadratic roots.

	For the special case of \emph{prime} $n$
	with \emph{cyclic} (multiplicative) group of units $U_n=\langle g\rangle$
	and primitive root (and generator) $g\in\mathbb{Z}_n,$
	the valid quadratic residues are $\langle g^2\rangle=Q_n,$
	with $\Delta \in Q_n$ being necessary.
	However, for a general composite modulus $n,$
	development of Legendre and Jacobi symbol theory
	is needed for a more thorough assessment of Proposition \ref{prop:modquad}.

	Of note in the modular congruence setting
	is the failure of the Zero Product Property in $\mathbb{Z}_n$
	when modulus $n$ is composite
	(and $\mathbb{Z}_n$ is \emph{not} a field nor integral domain).
	This introduces uncertainty (non-uniqueness) in factorization,
	such as in
	\[ x^2 \equiv (x+2)^2\ (mod\ 4),\]
	and results in a quadratic formula for a general composite modulus
	and arbitrary coefficients to be unobtainable,
	with coefficient compatibility conditions being more restrictive
	than the traditional discriminant compatibility conditions.
	
\subsection{Beyond Fields and Commutative Rings}

	Without going into details,
	there is research
	on additional extensions of quadratic formula
	to quadratics over noncommutative division rings
	such as the quaternion ring $\mathbb{H},$
	or for the physics-relevant adaptation
	of the split-quaternions $\mathbb{H}_s$
	(see \cite{cao2020quadratic}).
	However, the complexity of the computational results
	go well beyond what we wish to present here,
	as accessible connections to a perspective of transformation
	have yet to be uncovered.

\newpage




\end{document}